\documentclass[preprint,12pt]{elsarticle}

\usepackage{amsmath,amssymb,amsfonts}
\usepackage{amsthm}
\usepackage{graphicx}
\usepackage{booktabs}
\usepackage{array}
\usepackage{xcolor}
\usepackage[linesnumbered,ruled,vlined]{algorithm2e}
\usepackage[colorlinks=true, allcolors=blue]{hyperref}
\usepackage{tikz}
\usetikzlibrary{arrows.meta, shapes.geometric}

\theoremstyle{plain}
\newtheorem{theorem}{Theorem}
\newtheorem{lemma}{Lemma}
\newtheorem{proposition}{Proposition}

\DeclareMathOperator*{\argmin}{argmin}
\DeclareMathOperator*{\argmax}{argmax}

\journal{Computers \& Chemical Engineering}

\begin{document}

\begin{frontmatter}

\title{G-ORACLE: A Fast Geometric Method To Explore All Near-Optimal Designs for Energy Systems}

\author[eins]{Sina Hajikazemi\corref{cor1}}
\ead{sina.hajikazemi@eins.tu-darmstadt.de}
\author[eins]{Florian Steinke}

\cortext[cor1]{Corresponding author}
\address[eins]{Energy Information Networks and Systems, Technical University of Darmstadt, Darmstadt, Germany}

\begin{abstract}
Modeling to Generate Alternatives (MGA) methods explore the near-optimal solutions of energy system optimization models (ESOMs), i.e., alternative planning pathways whose cost lies within a small predefined slack of the optimum but which can differ substantially in technology composition, regional distribution, and environmental impacts.
Among MGA methods, ORACLE uniquely guarantees convergence to the near-optimal set within a targeted tolerance by iteratively refining inner and outer polytope approximations.
At each iteration, given a trial point, it solves a problem as hard as the ESOM to find the nearest near-optimal solution to it.
However, the trial point is obtained by solving a bilevel problem whose solution time grows rapidly with the number of iterations and eventually dominates the computational cost of the method.

To eliminate this bottleneck, we present G-ORACLE, which exploits the fact that an optimal solution of the bilevel problem is always attained at a vertex of the outer approximation.
G-ORACLE therefore replaces the bilevel problem with incremental vertex enumeration by the double description method and one convex projection per vertex, while retaining the convergence guarantee of ORACLE.
A supporting-hyperplane check further avoids unnecessary recomputation of these projections as the inner approximation grows.

We evaluate G-ORACLE on an electricity-only PyPSA-Eur model, exploring the near-optimal space of the installed capacities of six generation and two storage technologies.
While ORACLE's bilevel problem exceeds 20 minutes at iteration 64, G-ORACLE performs the trial-point search in at most 14 seconds throughout 240 iterations, leaving the ESOM solves as the only significant computational cost.
\end{abstract}

\begin{keyword}
Near-optimal solutions \sep Energy system optimization \sep Modeling to generate alternatives \sep ORACLE \sep G-ORACLE \sep vertex enumeration \sep double description method
\end{keyword}

\end{frontmatter}

\section{Introduction}\label{sec:intro}

Energy system optimization models (ESOMs) determine the optimal capacity, technology mix, and operation of energy infrastructure by minimizing total system costs subject to technical, environmental, and regulatory constraints.
They serve as key decision-support tools for long-term planning, ranging from macro-level decarbonization pathways to the granular sizing and siting of generation, storage, and transmission assets.

However, cost-optimality is strictly bounded by the assumptions and constraints built into the optimization formulation. Qualitative or hard-to-quantify considerations, such as political feasibility, social acceptance, and localized environmental impacts, are rarely fully represented in the objective function or constraints. Consequently, a mathematically optimal solution may prove undesirable or infeasible in real-world decision-making. This divergence stems from two distinct sources of uncertainty: parametric uncertainty in inputs such as demand, renewable availability, and fuel prices, and structural uncertainty arising from model abstractions and idealized system assumptions. Modeling to Generate Alternatives (MGA) addresses these limitations by shifting focus to the \emph{near-optimal set}, i.e., all feasible solutions whose cost lies within a predefined slack of the optimal cost. This enables decision-makers to systematically evaluate alternative planning pathways against unmodeled criteria \citep{DECAROLIS2011145}.

This capability has made MGA a widely used tool in energy system studies. \citet{trutnevyte2016} demonstrated that the historical development of the Swiss electricity system diverged substantially from cost-optimal trajectories, arguing against reliance on a single optimum. Applying MGA to the global TIAM-UCL model, \citet{price_modelling_2017} identified technologically distinct decarbonization pathways with nearly the same total cost under climate policy constraints. Using the open PyPSA-Eur model, \citet{neumann2021,neumann2023} showed that the near-optimal set contains a technologically diverse range of renewable power system designs, leaving policymakers considerable freedom in planning, and that a subset of these designs remains robust to uncertain technology costs. Building on this work, \citet{grochowicz2023} intersected near-optimal sets across historical weather years to identify system configurations that are robust to weather variability. Other studies used MGA to examine the spatial distribution of infrastructure. \citet{sasse2019} exposed trade-offs between cost-efficiency and regional equity in the near-optimal set, and \citet{lombardi2020} introduced a spatially explicit MGA variant to identify alternative locations for renewable deployment. Finally, \citet{schwaeppe2024} derived near-optimal shadow prices to reveal cost-effective flexibility options. Collectively, these studies demonstrate the value of exploring near-optimal sets and highlight the need for algorithms capable of doing so reliably.

ESOMs contain far more variables than are of direct interest to decision-makers, so MGA studies typically focus on a few aggregated properties of interest (PoIs), such as the total installed capacity of a technology. Since the near-optimal set cannot be derived analytically, most MGA methods approximate it by repeatedly solving the ESOM with an additional constraint that bounds its cost, each time finding the near-optimal solution furthest in a given search direction in the space of PoIs (Section~\ref{sec:model-mga}). Each of these problems is as hard to solve as the original ESOM, so the number of solves required for an adequate exploration largely determines the computational effort of MGA, a trade-off examined by \citet{lombardi2023}. Heuristic MGA methods select the search directions by ad-hoc rules: Variable Min/Max (VMM) \citep{trutnevyte2012a,nacken2019,neumann2021,neumann2023} maximizes and minimizes each PoI; Random Vector (RV) variations \citep{chang1982a,trutnevyte2016,berntsen2017,sasse2019,sasse2023a} draw the objective weights at random and converge to the near-optimal set with probability one as the number of iterations tends to infinity; Hop-Skip-Jump (HSJ) \citep{brill1979,chang1983fuzzy,DECAROLIS2011145} minimizes the sum of the variables that were nonzero in previously found solutions, pushing the next solution away from them; and Manhattan MGA (MMGA) \citep{price_modelling_2017} maximizes the Manhattan distance to previously found solutions. However, these methods provide no stopping criterion, leaving the number of iterations to be set by the user, and no measure of how much of the near-optimal set has been explored. Modeling All Alternatives (MAA) \citep{pedersen2021modeling,grochowicz2023} instead uses the Quickhull algorithm \citep{barber1996quickhull} to compute the facets and the volume of the convex hull of the points found so far. The facet normals serve as new search directions; since the near-optimal set has finitely many vertices, this procedure recovers it in a finite number of iterations \citep{pedersen2021modeling}. In practice, the method can be stopped once the volume increase per iteration falls below a given threshold, which suggests that little of the set remains undiscovered. Since MAA maintains only an inner approximation, this criterion does not bound the unexplored part of the set; moreover, the computational cost of the convex hull grows exponentially with the number of PoIs, which in practice limits MAA to at most about 10 PoIs \citep{pedersen2021modeling}. A detailed review of these methods is given by \citet{Lau_2024}.

ORACLE \citep{turan2026oracle} is, to our knowledge, the only MGA method that guarantees convergence to within a desired tolerance of the near-optimal set, by maintaining an inner and an outer polytope approximation of it and iteratively refining both. Instead of the near-optimal solution furthest in a direction, ORACLE seeks the near-optimal solution nearest to a trial point, which it selects at each iteration as the point of the outer approximation furthest from the inner one. Selecting this trial point requires solving a bilevel problem at every iteration, which ORACLE reformulates as a mixed-integer linear program (MILP).
Linear bilevel programs are strongly NP-hard in general \citep{hansen1992bilevel}, and the number of binary variables in this MILP grows with the number of points in the inner approximation, that is, with every iteration.
As shown in Section~\ref{sec:numerical}, the solution time of this bilevel problem grows rapidly with the number of iterations, whereas that of the ESOM does not, so the bilevel problem eventually dominates the computational cost of the method.

In this paper, we introduce G-ORACLE, which avoids this bilevel problem entirely. We prove that an optimal solution of the bilevel problem is always attained at a vertex of the outer approximation, which lets us replace the bilevel problem with incremental vertex enumeration and independent, single-level convex projections. A supporting-hyperplane check further reuses previously computed distances instead of recomputing them at every iteration. Table~\ref{tab:mga-methods} compares G-ORACLE with the existing MGA methods.

\begin{table}[htbp]
\centering
\caption{Comparison of MGA method categories.}
\label{tab:mga-methods}
\scriptsize
\setlength{\tabcolsep}{4pt}
\begin{tabular}{>{\raggedright\arraybackslash}p{2.2cm}>{\raggedright\arraybackslash}p{2.0cm}>{\raggedright\arraybackslash}p{2.6cm}>{\raggedright\arraybackslash}p{2.7cm}>{\raggedright\arraybackslash}p{2.6cm}}
\toprule[0.1em]
Method & Exploration strategy & Selection mechanism & Stopping criterion & Main limitation \\
\midrule[0.07em]
Heuristics (VMM, RV, HSJ, MMGA) & Search direction & Ad-hoc rules & Iteration budget set by the user & No measure of exploration progress \\
\midrule[0.03em]
MAA & Search direction & Facet normals of the inner convex hull & Volume increase below a threshold (no~error~bound) & Convex-hull cost grows exponentially with the number of PoIs ($\lesssim 10$) \\
\midrule[0.03em]
ORACLE & Trial-point projection & Bilevel problem over the outer approximation & Inner--outer approximation distance $\le \delta$ (error~bound) & MILP reformulation grows with the number of inner points \\
\midrule[0.03em]
\mbox{G-ORACLE} (this work) & Trial-point projection & Vertex enumeration of the outer approximation & Inner--outer approximation distance $\le \delta$ (error~bound) & Number of vertices of the outer approximation \\
\bottomrule[0.1em]
\end{tabular}
\end{table}

Section~\ref{sec:background} formalizes the ESOM and the MGA problem and reviews the ORACLE method. Section~\ref{sec:geometric-replacement} presents G-ORACLE, which replaces ORACLE's bilevel problem with vertex enumeration and per-vertex convex projections, together with a supporting-hyperplane check. Section~\ref{sec:numerical} evaluates G-ORACLE on a PyPSA-Eur model and reports the resulting speedup over ORACLE. Section~\ref{sec:conclusion} concludes and discusses directions for future work.

\section{Background}\label{sec:background}
In this section we formalize the problem and review the ORACLE method, on which G-ORACLE builds.

\subsection{Problem Formulation}\label{sec:model-mga}

Due to their large size, ESOMs are typically formulated as linear programs (LPs) to ensure computational tractability \citep{kotzur2021modelers,cho2022expansion}. We consider a linear ESOM in the general form
\begin{equation}\label{eq:mp}
\begin{aligned}[t]
\nu^* = \min_{x \in \mathbb{R}^{n_x}} \quad & c^\top x \\
\text{s.t.} \quad & Ax \le b,
\end{aligned}
\end{equation}
where $x \in \mathbb{R}^{n_x}$ denotes the model's planning and operational decision variables, $c$ is the cost vector, and $A, b$ define the technical and economic constraints of the system.
Let $x^*$ denote an optimal solution to \eqref{eq:mp} with minimum total cost $\nu^* = c^\top x^*$, which we assume to be positive.

For a cost slack $\epsilon \ge 0$, the near-optimal solutions of \eqref{eq:mp} form the polyhedron
\begin{equation}\label{eq:near-opt}
\mathcal{X}^\epsilon = \{ x \in \mathbb{R}^{n_x} \mid Ax \le b, \; c^\top x \le (1+\epsilon)\nu^* \}.
\end{equation}
Because $x$ typically contains far more variables than are of direct interest to a decision-maker, MGA methods focus on a small number of PoIs.
These are defined via a linear mapping $z = Dx$, where $D \in \mathbb{R}^{n_z \times n_x}$ with $n_z \ll n_x$ (e.g., aggregating installed capacities of key technologies).
We define the \emph{near-optimal set} as the image of $\mathcal{X}^\epsilon$ under this mapping,
\begin{equation}\label{eq:near-opt-poi}
\mathcal{Z} = \{ z \in \mathbb{R}^{n_z} \mid z = Dx, \; x \in \mathcal{X}^\epsilon \}.
\end{equation}
Since $\mathcal{X}^\epsilon$ is a polyhedron and the map $x \mapsto Dx$ is linear, $\mathcal{Z}$ is also a polyhedron. We assume throughout that $\mathcal{Z}$ is bounded, so that it is a polytope.

Computing $\mathcal{Z}$ explicitly requires projecting the polyhedron $\mathcal{X}^\epsilon \subset \mathbb{R}^{n_x}$ onto $\mathbb{R}^{n_z}$, e.g., by Fourier--Motzkin elimination, which is intractable for ESOMs of realistic size. Direction-based MGA methods therefore approximate $\mathcal{Z}$ by iteratively exploring its vertices using Problem~\eqref{eq:mga-subproblem}, which maximizes a linear objective $w^\top z$ over $\mathcal{Z}$ and thus returns the near-optimal solution furthest in a given search direction $w \in \mathbb{R}^{n_z}$.

\begin{equation}\label{eq:mga-subproblem}
\begin{aligned}[t]
\max_{x \in \mathbb{R}^{n_x}} \quad & w^\top Dx \\
\text{s.t.} \quad & Ax \le b, \\
& c^\top x \le (1+\epsilon)\nu^*.
\end{aligned}
\end{equation}
Each solve of Problem~\eqref{eq:mga-subproblem} yields a near-optimal point $z = Dx$.

\subsection{The ORACLE Method}\label{sec:oracle-method}

ORACLE is an iterative method that maintains an inner approximation $\mathcal{I}$ and an outer approximation $\mathcal{O}$ of the near-optimal set, with $\mathcal{I} \subseteq \mathcal{Z} \subseteq \mathcal{O}$. The inner approximation is the convex hull of a finite set of known points $\mathcal{V}_I \subset \mathcal{Z}$, while the outer approximation is a polytope defined as the intersection of finitely many halfspaces each satisfied by every point of $\mathcal{Z}$.
At each iteration, the inner approximation expands by adding a new point of $\mathcal{Z}$ to $\mathcal{V}_I$, while the outer approximation shrinks by adding a halfspace that cuts off part of $\mathcal{O}$.
The algorithm terminates once the directed Hausdorff distance from the outer to the inner approximation falls below a desired convergence tolerance $\delta > 0$, which guarantees that both approximations are within that tolerance of the true near-optimal set $\mathcal{Z}$ in the same sense.

We summarize the four steps of the algorithm below. Throughout, we fix a norm $\|\cdot\|$ to measure distances in $\mathbb{R}^{n_z}$. Given a point $z \in \mathbb{R}^{n_z}$, its distance to the inner approximation $\mathcal{I}$ is given by
\begin{equation}\label{eq:f-def}
\begin{aligned}[t]
f(z) = \min_{y,\, s \in \mathbb{R}^{n_z}} \quad & \|s\| \\
\text{s.t.} \quad & y \in \mathcal{I}, \\
& z - y = s, \quad (\pi)
\end{aligned}
\end{equation}
where $\pi$ is the dual vector of the residual constraint.

\textbf{Step 1: Initialization.} Before the main loop, Problem~\eqref{eq:mga-subproblem} is solved for $w = \pm e_i$, $i = 1, \dots, n_z$, the standard basis vectors and their negatives, which maximizes and minimizes each PoI over $\mathcal{Z}$.
The resulting $2n_z$ points, together with the cost-optimal point $z^* = Dx^*$, are added to $\mathcal{V}_I$, initializing $\mathcal{I}$, and the initial outer approximation is the axis-aligned box $\mathcal{O} = \{z \in \mathbb{R}^{n_z} \mid z_{lb} \le z \le z_{ub}\}$, where $z_{lb}$ and $z_{ub}$ collect the resulting minima and maxima.

\textbf{Step 2: Trial point.} The point of $\mathcal{O}$ furthest from $\mathcal{I}$ is found by solving
\begin{equation}\label{eq:trial-point-problem}
z_{O} \in \argmax_{z \in \mathcal{O}} f(z), \qquad d = f(z_{O}).
\end{equation}
By construction, $d$ is the directed Hausdorff distance from $\mathcal{O}$ to $\mathcal{I}$. Because $\mathcal{I} \subseteq \mathcal{Z} \subseteq \mathcal{O}$, $d$ is also an upper bound on the directed Hausdorff distances from $\mathcal{O}$ to $\mathcal{Z}$ and from $\mathcal{Z}$ to $\mathcal{I}$. The algorithm terminates if $d \le \delta$.

\textbf{Step 3: Nearest feasible point.} If $d > \delta$, the nearest feasible point $z_I = Dx_I \in \mathcal{Z}$ to $z_O$ is found by solving
\begin{equation}\label{eq:nearest-feasible}
\begin{aligned}[t]
x_I \in \argmin_{x \in \mathbb{R}^{n_x},\, r \in \mathbb{R}^{n_z}} \quad & \|r\| \\
\text{s.t.} \quad & Ax \le b, \\
& c^\top x \le (1+\epsilon)\nu^*, \\
& z_O - Dx = r, \quad (\sigma)
\end{aligned}
\end{equation}
where $\sigma$ is the dual vector of the residual constraint.
In contrast to Problem~\eqref{eq:mga-subproblem}, which returns the near-optimal solution furthest in a search direction $w$, Problem~\eqref{eq:nearest-feasible} seeks the nearest one to $z_O$.

\textbf{Step 4: Approximation update.} $\mathcal{I}$ is expanded by adding $z_I$ to $\mathcal{V}_I$. If $z_I \ne z_O$, the outer approximation is updated as $\mathcal{O} \gets \mathcal{O} \cap \mathcal{H}_\mathrm{P}$, where
\begin{equation}\label{eq:projection-cut}
\mathcal{H}_\mathrm{P} = \{z \in \mathbb{R}^{n_z} \mid \sigma^\top z \le \sigma^\top z_I\}
\end{equation}
is the \emph{projection cut}, which contains $\mathcal{Z}$ and cuts off $z_O$ \citep[Appendix~A.4.1]{turan2026oracle}.

A second cut, the \emph{cost cut} \citep[Appendix~A.4.2]{turan2026oracle}, can optionally be added to $\mathcal{O}$.
It is derived from the value function
\begin{equation}\label{eq:value-fn}
\begin{aligned}[t]
\nu(z) = \min_{x \in \mathbb{R}^{n_x}} \quad & c^\top x \\
\text{s.t.} \quad & Ax \le b, \\
& Dx = z, \quad (\mu)
\end{aligned}
\end{equation}
that is, the minimum total system cost for given values $z$ of the PoIs, with $\nu(z) = +\infty$ if Problem~\eqref{eq:value-fn} is infeasible.
Since $\nu$ is convex \citep[Section~5.6]{boyd2004convex}, the dual vector $\mu$ of the constraint $Dx = z$ in Problem~\eqref{eq:value-fn} with $z = z_I$ is a subgradient of $\nu$ at $z_I$, so $\nu(z) \ge \nu(z_I) + \mu^\top(z - z_I)$ for all $z$.
Moreover, every $z \in \mathcal{Z}$ satisfies $\nu(z) \le (1+\epsilon)\nu^*$, since $z = Dx$ for some $x \in \mathcal{X}^\epsilon$, which is feasible in Problem~\eqref{eq:value-fn}.
Hence the cost cut
\begin{equation}\label{eq:cost-cut}
\mathcal{H}_\mathrm{C} = \{z \in \mathbb{R}^{n_z} \mid \nu(z_I) + \mu^\top(z - z_I) \le (1+\epsilon)\nu^*\}
\end{equation}
contains $\mathcal{Z}$.
The cost cut is not required for convergence, but it can accelerate convergence by removing parts of $\mathcal{O}$ that violate the near-optimality constraint.
We therefore omit it from Algorithms~\ref{alg:oracle} and~\ref{alg:geometric-trial-point}, but use it in the numerical experiments (Section~\ref{sec:numerical}).

This completes ORACLE's four steps, summarized in Algorithm~\ref{alg:oracle} (Steps~2--4) and illustrated in Figure~\ref{fig:oracle-illustration}.
However, Problem~\eqref{eq:trial-point-problem} in Step~2 is a bilevel problem. The upper level maximizes $f(z)$ over $z \in \mathcal{O}$, while evaluating $f(z)$ requires solving Problem~\eqref{eq:f-def} as the lower level, which minimizes the distance from $z$ to $\mathcal{I}$.
It is reformulated as a single-level MILP by replacing the lower-level problem with its KKT conditions. The complementarity constraints are linearized using the big-M method \citep{bard1998bilevel,pineda2018bilevel} that introduces $O(|\mathcal{V}_I|)$ binary variables.
To keep this reformulation a MILP, $\|\cdot\|$ is restricted to the $\ell_1$ or $\ell_\infty$ norm.\footnote{For the $\ell_2$ norm, the lower-level problem can be written as a convex QP, and its KKT conditions can be linearized in the same way; however, the upper level then maximizes a convex quadratic function, so the reformulation becomes a nonconvex mixed-integer quadratic program rather than a MILP.} As shown in Section~\ref{sec:numerical}, the time needed to solve this bilevel problem grows rapidly with the number of inner points.

\begin{algorithm}
\caption{ORACLE \citep{turan2026oracle}}\label{alg:oracle}
\KwData{Initial $\mathcal{I} = \operatorname{conv}(\mathcal{V}_I)$, $\mathcal{O}$ (Step 1); tolerance $\delta > 0$}
\KwResult{$\mathcal{I}, \mathcal{O}$ converged to within $\delta$ of $\mathcal{Z}$}
\Repeat{$d \le \delta$}{
    $z_O \gets \argmax_{z \in \mathcal{O}} f(z)$;\quad $d \gets f(z_O)$ \tcp*{Step 2}
    \If{$d > \delta$}{
        solve Problem~\eqref{eq:nearest-feasible} and obtain $z_I$, $\sigma$ \tcp*{Step 3}
        $\mathcal{V}_I \gets \mathcal{V}_I \cup \{z_I\}$ \tcp*{Step 4}
        \If{$z_I \ne z_O$}{
            $\mathcal{O} \gets \mathcal{O} \cap \mathcal{H}_\mathrm{P}$\;
        }
    }
}
\end{algorithm}

\begin{figure}
\centering
\resizebox{\textwidth}{!}{%
\begin{tikzpicture}[
  truez/.style={dashed, thick, black!55},
  outerbox/.style={thick, blue!65!black},
  gapfill/.style={fill=blue!8},
  innerfill/.style={fill=green!12},
  ivertex/.style={circle, fill=green!55!black, inner sep=2.2pt},
  ptz/.style={circle, fill=red, inner sep=2.2pt},
  pti/.style={circle, fill=orange!90!black, inner sep=2.2pt},
  distline/.style={black!70, thick},
  paneltitle/.style={font=\small\bfseries},
  >=Stealth
]

\begin{scope}[scale=0.95]
\fill[gapfill] (-1.8,-2.2) -- (2.4,-2.2) -- (2.4,2.0) -- (-1.8,2.0) -- cycle;
\fill[innerfill] (2.4,0.3) -- (0.2,2.0) -- (-1.8,0.6) -- (0.9,-2.2) -- cycle;
\draw[truez] (2.4,0.3) -- (1.6,1.7) -- (0.2,2.0) -- (-1.8,0.6) -- (-1.0,-1.6) -- (0.9,-2.2) -- cycle;
\draw[outerbox] (-2.1,-2.2) -- (2.7,-2.2);
\draw[outerbox] (2.4,-2.5) -- (2.4,2.3);
\draw[outerbox] (-2.1,2.0) -- (2.7,2.0);
\draw[outerbox] (-1.8,-2.5) -- (-1.8,2.3);
\foreach \p in {(2.4,0.3),(0.2,2.0),(-1.8,0.6),(0.9,-2.2)} \node[ivertex] at \p {};
\node[green!40!black] at (0.4,0.3) {$\mathcal{I}$};
\node[blue!65!black] at (-1.3,1.5) {$\mathcal{O}$};
\node[black] (zlabel) at (1.9,-1.6) {$\mathcal{Z}$};
\draw[->, thin, black] (zlabel) -- (1.65,-0.95);
\node[paneltitle] at (0.3,-2.8) {Step 1};
\end{scope}

\begin{scope}[shift={(5.5,0)}, scale=0.95]
\fill[gapfill] (-1.8,-2.2) -- (2.4,-2.2) -- (2.4,2.0) -- (-1.8,2.0) -- cycle;
\fill[innerfill] (2.4,0.3) -- (0.2,2.0) -- (-1.8,0.6) -- (0.9,-2.2) -- cycle;
\draw[truez] (2.4,0.3) -- (1.6,1.7) -- (0.2,2.0) -- (-1.8,0.6) -- (-1.0,-1.6) -- (0.9,-2.2) -- cycle;
\draw[outerbox] (-2.1,-2.2) -- (2.7,-2.2);
\draw[outerbox] (2.4,-2.5) -- (2.4,2.3);
\draw[outerbox] (-2.1,2.0) -- (2.7,2.0);
\draw[outerbox] (-1.8,-2.5) -- (-1.8,2.3);
\foreach \p in {(2.4,0.3),(0.2,2.0),(-1.8,0.6),(0.9,-2.2)} \node[ivertex] at \p {};
\draw[distline] (-1.8,-2.2) -- (-0.401,-0.850) node[pos=0.4, sloped, above, inner sep=2pt] {$d$};
\draw[thin, black!50] ([shift={(-1.8,-2.2)}]24:1.9442) arc (24:64:1.9442);
\node[ptz] at (-0.401,-0.850) {};
\node[ptz, label={[xshift=3pt]below left:$z_O$}] at (-1.8,-2.2) {};
\node[paneltitle] at (0.3,-2.8) {Step 2};
\end{scope}

\begin{scope}[shift={(11.0,0)}, scale=0.95]
\fill[gapfill] (-1.8,-2.2) -- (2.4,-2.2) -- (2.4,2.0) -- (-1.8,2.0) -- cycle;
\fill[innerfill] (2.4,0.3) -- (0.2,2.0) -- (-1.8,0.6) -- (0.9,-2.2) -- cycle;
\draw[truez] (2.4,0.3) -- (1.6,1.7) -- (0.2,2.0) -- (-1.8,0.6) -- (-1.0,-1.6) -- (0.9,-2.2) -- cycle;
\draw[outerbox] (-2.1,-2.2) -- (2.7,-2.2);
\draw[outerbox] (2.4,-2.5) -- (2.4,2.3);
\draw[outerbox] (-2.1,2.0) -- (2.7,2.0);
\draw[outerbox] (-1.8,-2.5) -- (-1.8,2.3);
\foreach \p in {(2.4,0.3),(0.2,2.0),(-1.8,0.6),(0.9,-2.2)} \node[ivertex] at \p {};
\draw[distline] (-1.8,-2.2) -- (-1.0,-1.6);
\draw[thin, black!50] ([shift={(-1.8,-2.2)}]17:1.0) arc (17:57:1.0);
\node[ptz, label={[xshift=3pt]below left:$z_O$}] at (-1.8,-2.2) {};
\node[pti, label={below:$z_I$}] at (-1.0,-1.6) {};
\node[paneltitle] at (0.3,-2.8) {Step 3};
\end{scope}

\begin{scope}[shift={(16.5,0)}, scale=0.95]
\fill[gapfill] (2.4,-2.2) -- (-0.55,-2.2) -- (-1.8,-0.5333) -- (-1.8,2.0) -- (2.4,2.0) -- cycle;
\fill[innerfill] (2.4,0.3) -- (0.2,2.0) -- (-1.8,0.6) -- (-1.0,-1.6) -- (0.9,-2.2) -- cycle;
\draw[truez] (2.4,0.3) -- (1.6,1.7) -- (0.2,2.0) -- (-1.8,0.6) -- (-1.0,-1.6) -- (0.9,-2.2) -- cycle;
\draw[outerbox] (2.7,-2.2) -- (-0.85,-2.2);
\draw[outerbox] (-0.37,-2.44) -- (-1.98,-0.2933);
\draw[outerbox] (-1.8,-0.8333) -- (-1.8,2.3);
\draw[outerbox] (-2.1,2.0) -- (2.7,2.0);
\draw[outerbox] (2.4,2.3) -- (2.4,-2.5);
\foreach \p in {(2.4,0.3),(0.2,2.0),(-1.8,0.6),(0.9,-2.2),(-1.0,-1.6)} \node[ivertex] at \p {};
\node[paneltitle] at (0.3,-2.8) {Step 4};
\end{scope}

\draw[->, thick] (2.55,-0.1) -- (3.55,-0.1);
\draw[->, thick] (13.55,-0.1) -- (14.55,-0.1);

\node[diamond, draw, thick, fill=white, minimum width=1.6cm, minimum height=0.9cm, font=\footnotesize, align=center] (dec) at (5.8,-4.6) {$d \le \delta$?};
\draw[->, thick] (5.8,-3.0) -- (dec.north);
\draw[->, thick] (dec.west) -- node[above, font=\footnotesize] {yes} ++(-1.3,0) node[left, font=\footnotesize] {terminate};
\draw[->, thick] (dec.east) -- node[above, font=\footnotesize] {no} (11.3,-4.6) -- (11.3,-3.0);

\draw[->, thick]
  (16.8,2.3) -- ++(0,0.7)
  -- (5.8,3.0)
  -- (5.8,2.3);

\end{tikzpicture}
}
\caption{
    Illustration of ORACLE algorithm. The near-optimal set $\mathcal{Z}$ (dashed), $\mathcal{O}$ in blue, $\mathcal{I}$ in green.
    Step 1: both approximations are initialized by maximizing and minimizing each coordinate of $\mathbb{R}^{n_z}$ over $\mathcal{Z}$.
    Step 2: the trial point $z_O$, the member of $\mathcal{O}$ furthest from $\mathcal{I}$, is found by solving a bilevel problem.
    Step 3: the nearest point $z_I$ of $\mathcal{Z}$ to $z_O$ is found.
    Step 4: a halfspace cutting off $z_O$ shrinks $\mathcal{O}$, and $\mathcal{I}$ expands by adding $z_I$ to it.
    }
\label{fig:oracle-illustration}
\end{figure}

\section{G-ORACLE}\label{sec:geometric-replacement}

In this section we show that, since $f$ is convex (Lemma~\ref{lemma:convex-dist}), Problem~\eqref{eq:trial-point-problem} always has an optimal solution at a vertex of the outer approximation $\mathcal{O}$.
Given this result, the bilevel problem can be replaced with a procedure that (a) enumerates the vertex set $\mathcal{V}_O = \operatorname{vert}(\mathcal{O})$, (b) evaluates $f(v)$ at each vertex $v \in \mathcal{V}_O$, and (c) returns a vertex with the largest value of $f$.
Step (b) requires solving an independent single-level convex program~\eqref{eq:f-def} for each vertex, which can be formulated as an LP for the $\ell_1$ and $\ell_\infty$ norms, or as a QP for the $\ell_2$ norm.
Section~\ref{sec:vertex-enum} establishes the vertex property and describes the incremental vertex enumeration.
Section~\ref{sec:distance-update} introduces a certificate that identifies vertices whose previously computed distance remains valid as $\mathcal{I}$ grows, so that $f$ need not be re-evaluated at them.
Section~\ref{sec:algorithm} summarizes the resulting algorithm.

\subsection{Vertex Enumeration}\label{sec:vertex-enum}

We first show that the optimum of Problem~\eqref{eq:trial-point-problem} is attained at a vertex of $\mathcal{O}$, and then describe how $\mathcal{V}_O$ is maintained as $\mathcal{O}$ is refined.
The vertex property follows from the convexity of $f$.

\begin{lemma}\label{lemma:convex-dist}
For any non-empty, closed, convex set $\mathcal{I} \subseteq \mathbb{R}^{n_z}$ and any norm $\|\cdot\|$ on $\mathbb{R}^{n_z}$, the function $f$ is convex \citep[Section~3.2.5]{boyd2004convex}.
\end{lemma}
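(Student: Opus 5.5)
The plan is to prove convexity of $f(z)=\inf_{y\in\mathcal{I}}\|z-y\|$ directly from the definition, using only the triangle inequality, the absolute homogeneity of the norm, and the convexity of $\mathcal{I}$. First I will note that $f$ is finite and well defined. Since $\mathcal{I}$ is non-empty, $f(z)\le\|z-y_0\|<\infty$ for any $y_0\in\mathcal{I}$, and $f\ge 0$. Closedness of $\mathcal{I}$, together with coercivity of $y\mapsto\|z-y\|$, shows that the minimum in Problem~\eqref{eq:f-def} is attained. This matches the paper's use of $\min$, but the convexity argument itself will work with infima and $\varepsilon$-approximate minimizers, so attainment is not essential.

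Next, fix $z_1,z_2\in\mathbb{R}^{n_z}$, $\theta\in[0,1]$, and $\eta>0$. Choose $y_1,y_2\in\mathcal{I}$ with $\|z_i-y_i\|\le f(z_i)+\eta$; when the minima are attained we may take $\eta=0$. By convexity of $\mathcal{I}$, the point $y_\theta=\theta y_1+(1-\theta)y_2$ lies in $\mathcal{I}$, so $(y_\theta,\,s_\theta)$ with $s_\theta = z_\theta - y_\theta$ is feasible for Problem~\eqref{eq:f-def} at $z_\theta=\theta z_1+(1-\theta)z_2$. Then
\begin{equation*}
f(z_\theta)\le\|\theta(z_1-y_1)+(1-\theta)(z_2-y_2)\|\le\theta\|z_1-y_1\|+(1-\theta)\|z_2-y_2\|\le\theta f(z_1)+(1-\theta)f(z_2)+\eta,
\end{equation*}
using the triangle inequality and homogeneity. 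Letting $\eta\downarrow 0$ gives the convexity inequality.

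An equivalent route, matching the cited reference, is to observe that $(z,y)\mapsto\|z-y\|+\iota_{\mathcal{I}}(y)$ is jointly convex, where $\iota_{\mathcal{I}}$ is the convex indicator of $\mathcal{I}$. Partial minimization over $y$ of a jointly convex function preserves convexity, provided the result is nowhere $-\infty$, and here $f\ge0$. I would present the direct argument above as the main proof and mention this as the structural reason.

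There is no real obstacle. The only point requiring care is not to assume that minimizers exist, or that they are unique, which fails for the $\ell_1$ and $\ell_\infty$ norms. The $\eta$-approximation handles this uniformly.
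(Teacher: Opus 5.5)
Your proof is correct and follows the paper's argument: take (near-)minimizers $y_1,y_2\in\mathcal{I}$, use $\theta y_1+(1-\theta)y_2\in\mathcal{I}$ as a feasible point at $z_\theta$, and apply the triangle inequality and homogeneity. The only difference is that the paper uses closedness of $\mathcal{I}$ to take exact minimizers, while your $\eta$-approximate minimizers make closedness unnecessary for convexity.
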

\begin{proof}
Let $z_1, z_2 \in \mathbb{R}^{n_z}$ and $\lambda \in [0,1]$. Since $\mathcal{I}$ is non-empty and closed, there exist $y_1, y_2 \in \mathcal{I}$ achieving $f(z_1) = \|z_1-y_1\|$ and $f(z_2) = \|z_2-y_2\|$.

By convexity of $\mathcal{I}$, $y_\lambda = \lambda y_1 + (1-\lambda)y_2 \in \mathcal{I}$. By the definition of $f$, the triangle inequality, and positive homogeneity of the norm,
\begin{align*}
f(\lambda z_1 + (1-\lambda)z_2) &\le \|\lambda z_1 + (1-\lambda)z_2 - y_\lambda\| \\
&= \|\lambda(z_1-y_1) + (1-\lambda)(z_2-y_2)\| \\
&\le \lambda \|z_1-y_1\| + (1-\lambda) \|z_2-y_2\| \\
&= \lambda f(z_1) + (1-\lambda) f(z_2). \qedhere
\end{align*}
\end{proof}

\begin{theorem}\label{thm:vertex}
Let $\mathcal{I} \subseteq \mathbb{R}^{n_z}$ be non-empty, closed, and convex, and let $\mathcal{O} \subseteq \mathbb{R}^{n_z}$ be a non-empty polytope. Then $\max_{z \in \mathcal{O}} f(z)$ is attained at a vertex of $\mathcal{O}$ \citep[cf.][Corollary~32.3.2]{rockafellar1970convex}.
\end{theorem}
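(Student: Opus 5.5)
We combine Lemma~\ref{lemma:convex-dist} with the Minkowski (Krein--Milman) representation of a polytope. First we check that the maximum exists at all. Since $\mathcal{I}$ is non-empty, $f$ is finite everywhere. As a finite convex function on $\mathbb{R}^{n_z}$, it is continuous. More directly, $f$ is $1$-Lipschitz with respect to $\|\cdot\|$: for any $z, z'$ and $y\in\mathcal{I}$ we have $\|z-y\|\le\|z-z'\|+\|z'-y\|$, and taking the infimum over $y$ gives $f(z)\le \|z-z'\|+f(z')$. The polytope $\mathcal{O}$ is non-empty and compact, so by Weierstrass the maximum of $f$ over $\mathcal{O}$ is attained at some $z^\star\in\mathcal{O}$.

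Next we use the fact that a non-empty polytope is the convex hull of its finitely many vertices. This holds because $\mathcal{O}$ is a bounded intersection of finitely many halfspaces, so it is pointed and the Minkowski--Weyl theorem applies. Writing $\mathcal{V}_O=\operatorname{vert}(\mathcal{O})=\{v_1,\dots,v_k\}$, which is non-empty, we can express $z^\star=\sum_{j}\lambda_j v_j$ with $\lambda_j\ge 0$ and $\sum_j\lambda_j=1$.

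Convexity of $f$ from Lemma~\ref{lemma:convex-dist} extends by induction to Jensen's inequality for finite convex combinations. This gives
\[
f(z^\star)\le \sum_j \lambda_j f(v_j)\le \max_j f(v_j).
\]
Every $v_j$ lies in $\mathcal{O}$, so $\max_j f(v_j)\le f(z^\star)$. Equality therefore holds throughout, and any vertex $v_{j^\star}$ maximizing $f$ over $\mathcal{V}_O$ is an optimal solution of Problem~\eqref{eq:trial-point-problem}.

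The only step that needs care is the representation $\mathcal{O}=\operatorname{conv}(\mathcal{V}_O)$ with finitely many vertices. It relies on boundedness: an unbounded polyhedron would need recession directions, along which $f$ could grow without bound. We will cite the Minkowski--Weyl theorem, or equivalently Rockafellar's Corollary~32.3.2, for this step rather than reprove it. All other steps follow routinely from Lemma~\ref{lemma:convex-dist} and compactness.
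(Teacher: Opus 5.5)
Your proof is correct and follows the same route as the paper: you write points of $\mathcal{O}$ as convex combinations of its finitely many vertices (Minkowski--Weyl) and then apply Jensen's inequality via Lemma~\ref{lemma:convex-dist}. The only difference is your preliminary step showing that a maximizer $z^\star$ exists (Lipschitz continuity plus compactness), which is correct but unnecessary: the paper instead takes $v^\star$ to maximize $f$ over the finite vertex set and shows directly that $f(z)\le f(v^\star)$ for every $z\in\mathcal{O}$, which already gives existence.
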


\begin{proof}
By the Minkowski--Weyl theorem, $\mathcal{O}$ is the convex hull of its finitely many vertices $\operatorname{vert}(\mathcal{O}) = \{v_1, \dots, v_k\}$, so every $z \in \mathcal{O}$ can be written as $z = \sum_{i=1}^k \lambda_i v_i$ with $\sum_{i=1}^k \lambda_i = 1$ and $\lambda_i \ge 0$.

Let $v^\star \in \argmax_{v \in \operatorname{vert}(\mathcal{O})} f(v)$, which exists since $\operatorname{vert}(\mathcal{O})$ is finite. By Lemma~\ref{lemma:convex-dist} and Jensen's inequality,
\begin{equation*}
f(z) = f\left(\sum_{i=1}^k \lambda_i v_i\right) \le \sum_{i=1}^k \lambda_i f(v_i) \le f(v^\star) \quad \forall z \in \mathcal{O}.
\end{equation*}
Since $v^\star \in \mathcal{O}$, it is a maximizer of $f$ over $\mathcal{O}$.
\end{proof}

Since $\mathcal{I} = \operatorname{conv}(\mathcal{V}_I)$ is a non-empty polytope, Theorem~\ref{thm:vertex} applies to Problem~\eqref{eq:trial-point-problem} for any choice of norm.

Recovering the vertex ($V$-) representation of a polytope from its halfspace ($H$-) representation, here $\mathcal{V}_O$ from the cuts defining $\mathcal{O}$, is a classical problem in polyhedral computation.
Since $\mathcal{O}$ is refined by adding halfspaces, we update $\mathcal{V}_O$ incrementally with the double description method \citep{fukuda1996double}, processing the new halfspaces one at a time rather than recomputing $\mathcal{V}_O$ from scratch.
When a halfspace $\mathcal{H}$ is added, the method partitions the current vertices into those strictly inside $\mathcal{H}$, those on its bounding hyperplane, and those outside it.
The vertices outside $\mathcal{H}$ are discarded, and a new vertex is created where the bounding hyperplane intersects each edge of $\mathcal{O}$ that joins a vertex inside $\mathcal{H}$ to one outside it.

The cost of each double description update, as well as the number of distance evaluations in Step~2, grows with the number of vertices of $\mathcal{O}$.
By the upper bound theorem \citep{mcmullen1970upperbound}, an $n_z$-dimensional polytope defined by $m \ge n_z + 1$ linear inequalities has at most
\begin{equation}\label{eq:upper-bound}
V(m, n_z) = \binom{m - \lceil n_z/2 \rceil}{\lfloor n_z/2 \rfloor} + \binom{m - \lfloor n_z/2 \rfloor - 1}{\lceil n_z/2 \rceil - 1}
\end{equation}
vertices, a bound that is $O\left(m^{\lfloor n_z/2 \rfloor}\right)$ for fixed $n_z$ \citep{seidel1995upper}.
However, in our experiments, the number of vertices remains far below this worst-case bound (Section~\ref{sec:numerical}).

\subsection{Distance Update}\label{sec:distance-update}

Evaluating $f(v)$ at every vertex $v \in \mathcal{V}_O$ in every iteration is largely redundant, because most vertices persist between iterations, and $\mathcal{I}$ grows by a single point at a time.
We therefore derive a sufficient condition, based on the optimal dual vector $\pi$ already obtained when $f(v)$ was computed, under which this distance remains valid after $\mathcal{I}$ is updated, so that Problem~\eqref{eq:f-def} need not be solved again for $v$.

\begin{lemma}\label{lemma:dual-cert}
Let $y^* \in \mathcal{I}$ and $s^* = z - y^*$ be an optimal solution to \eqref{eq:f-def}, and let $\pi \in \mathbb{R}^{n_z}$ be an associated optimal dual vector. Then
\begin{equation}\label{eq:dual-cert-cond}
\pi^\top y \le \pi^\top y^*, \qquad \forall y \in \mathcal{I}.
\end{equation}
\end{lemma}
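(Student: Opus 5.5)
The plan is to write down the Lagrangian dual of Problem~\eqref{eq:f-def}, keeping $y \in \mathcal{I}$ as an abstract constraint and dualizing only the residual constraint $z - y = s$ with multiplier $\pi$. The Lagrangian is
\begin{equation*}
L(y,s,\pi) = \|s\| + \pi^\top (z - y - s),
\end{equation*}
and the dual function is obtained by minimizing separately over $s \in \mathbb{R}^{n_z}$ and $y \in \mathcal{I}$. The minimization over $s$ gives $\inf_s (\|s\| - \pi^\top s)$, which equals $0$ if the dual norm satisfies $\|\pi\|_* \le 1$ and $-\infty$ otherwise. The minimization over $y$ gives $\pi^\top z - \sup_{y \in \mathcal{I}} \pi^\top y$. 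Hence the dual problem is to maximize $\pi^\top z - \sigma_{\mathcal{I}}(\pi)$ subject to $\|\pi\|_* \le 1$, where $\sigma_{\mathcal{I}}$ is the support function of $\mathcal{I}$.

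The next step is strong duality. The primal is convex, its constraints are affine together with a closed convex set constraint, and it is feasible since $\mathcal{I} \neq \emptyset$. When $\mathcal{I}$ is a polytope (the case in the paper), the problem is an LP or QP, depending on the norm. In general one can invoke Slater's condition in its relaxed form, with the abstract constraint handled through relative interior points. Either way, there is zero duality gap and the dual optimum is attained by $\pi$. This gives
\begin{equation*}
\|s^*\| = \pi^\top z - \sup_{y\in\mathcal{I}} \pi^\top y.
\end{equation*}

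Then I will combine this with the inequalities valid at $(y^*, s^*)$. From $\|\pi\|_* \le 1$ we get $\pi^\top s^* \le \|s^*\|$. Therefore
\begin{equation*}
\sup_{y\in\mathcal{I}}\pi^\top y = \pi^\top z - \|s^*\| \le \pi^\top z - \pi^\top s^* = \pi^\top y^*.
\end{equation*}
This is exactly \eqref{eq:dual-cert-cond}.

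Equivalently, I can phrase the argument via KKT/saddle-point optimality: at a saddle point, $y^*$ minimizes $L(\cdot, s^*, \pi)$ over $\mathcal{I}$, i.e.\ it minimizes $-\pi^\top y$ over $\mathcal{I}$. That is the same statement, and it may be the cleaner route to present.

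The main obstacle is justifying that an optimal dual $\pi$ exists with zero gap and that it pairs with \emph{any} primal optimum $(y^*,s^*)$, i.e.\ that the saddle-point property holds for every primal–dual optimal pair. I would handle this by noting that under strong duality every primal optimum and every dual optimum together form a saddle point; this is a standard fact, and the inequality chain above uses only the dual value equality and dual feasibility. For the polyhedral $\mathcal{I}=\operatorname{conv}(\mathcal{V}_I)$ used in the paper, LP/QP duality gives all of this directly.
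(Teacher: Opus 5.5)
Your proposal is correct and follows the paper's proof essentially line for line: the same Lagrangian with only the residual constraint dualized, the same dual function $\pi^\top z - \sup_{y\in\mathcal{I}}\pi^\top y$ on $\|\pi\|_*\le 1$, zero duality gap, and the same H\"older step $\pi^\top s^* \le \|s^*\|$ to conclude. Your added remarks are sound refinements of the paper's one-line appeal to strong duality: the relative-interior form of Slater's condition, and the observation that the chain uses only the value equality and dual feasibility, so it holds for every primal optimum.
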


\begin{proof}
The Lagrangian $\mathcal{L}(y, s, \pi)$ for problem \eqref{eq:f-def} with $y \in \mathcal{I}$ and $s \in \mathbb{R}^{n_z}$ is given by
\begin{equation*}
\mathcal{L}(y, s, \pi) = \|s\| + \pi^\top(z - y - s) = \pi^\top z + (\|s\| - \pi^\top s) - \pi^\top y.
\end{equation*}
Let $\|\pi\|_* = \sup_{\|w\| \le 1} \pi^\top w$ denote the dual norm, for which H\"older's inequality gives $\pi^\top w \le \|\pi\|_* \|w\|$ for every $w \in \mathbb{R}^{n_z}$. Minimizing $\mathcal{L}(y, s, \pi)$ over $s \in \mathbb{R}^{n_z}$ yields $\inf_s (\|s\| - \pi^\top s) = 0$ if $\|\pi\|_* \le 1$, and $-\infty$ otherwise. Restricting to dual-feasible $\pi$ and further minimizing over $y \in \mathcal{I}$ yields the dual objective function
\begin{equation*}
g(\pi) = \inf_{y \in \mathcal{I},\, s \in \mathbb{R}^{n_z}} \mathcal{L}(y, s, \pi) = \pi^\top z - \sup_{y \in \mathcal{I}} \pi^\top y, \qquad \text{for } \|\pi\|_* \le 1.
\end{equation*}

Since the only dualized constraint is linear and \eqref{eq:f-def} is feasible with a finite optimum, strong duality holds, so $f(z) = \|s^*\| = g(\pi)$. Substituting $z = y^* + s^*$ into $g(\pi)$ yields
\begin{equation*}
\|s^*\| = \pi^\top(y^* + s^*) - \sup_{y \in \mathcal{I}} \pi^\top y.
\end{equation*}
Rearranging terms and applying H\"older's inequality to $s^*$ ($\pi^\top s^* \le \|\pi\|_* \|s^*\| \le \|s^*\|$, using dual feasibility $\|\pi\|_* \le 1$), we obtain
\begin{equation*}
\sup_{y \in \mathcal{I}} \pi^\top y - \pi^\top y^* = \pi^\top s^* - \|s^*\| \le 0.
\end{equation*}
Thus, $\sup_{y \in \mathcal{I}} \pi^\top y \le \pi^\top y^*$, which implies $\pi^\top y \le \pi^\top y^*$ for all $y \in \mathcal{I}$.
\end{proof}

Geometrically, \eqref{eq:dual-cert-cond} states that the hyperplane $\{y \in \mathbb{R}^{n_z} \mid \pi^\top y = \pi^\top y^*\}$ supports $\mathcal{I}$ at $y^*$ (Figure~\ref{fig:light-check-illustration}).

\begin{theorem}\label{thm:light-check}
Let $y^* \in \mathcal{I}$ and $\pi \in \mathbb{R}^{n_z}$ be an optimal solution and an associated optimal dual vector of Problem~\eqref{eq:f-def} for a given $z \in \mathbb{R}^{n_z}$. Let $z_I \in \mathbb{R}^{n_z}$ be a newly added point, $\mathcal{V}_I' = \mathcal{V}_I \cup \{z_I\}$, and $\mathcal{I}' = \operatorname{conv}(\mathcal{V}_I')$. If
\begin{equation}\label{eq:light-check}
\pi^\top z_I \le \pi^\top y^*,
\end{equation}
then $y^*$ and $\pi$ remain an optimal solution and an optimal dual vector of Problem~\eqref{eq:f-def} with $\mathcal{I}$ replaced by $\mathcal{I}'$, and in particular $f(z)$ is unchanged.
\end{theorem}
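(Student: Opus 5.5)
The plan is to certify optimality of $(y^*, s^*)$ and $\pi$ for the enlarged problem directly. Primal feasibility and weak duality will give the upper bound, and the dual function $g$ from the proof of Lemma~\ref{lemma:dual-cert} will give the matching lower bound.

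First, since $\mathcal{I} \subseteq \mathcal{I}'$, the point $y^*$ is still feasible for Problem~\eqref{eq:f-def} with $\mathcal{I}'$. Together with $s^* = z - y^*$, this shows that the optimal value $f'(z)$ of the modified problem satisfies $f'(z) \le \|s^*\| = f(z)$. Second, I will show that the supporting-hyperplane inequality~\eqref{eq:dual-cert-cond} extends from $\mathcal{I}$ to $\mathcal{I}'$. Every $y \in \mathcal{I}'$ can be written as $y = \lambda \bar y + (1-\lambda) z_I$ with $\bar y \in \mathcal{I}$ and $\lambda \in [0,1]$, because $\operatorname{conv}(\mathcal{V}_I \cup \{z_I\})$ is the set of convex combinations of a point of $\operatorname{conv}(\mathcal{V}_I)$ and $z_I$. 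Combining Lemma~\ref{lemma:dual-cert} for $\bar y$ with hypothesis~\eqref{eq:light-check} for $z_I$ gives $\pi^\top y \le \pi^\top y^*$ for all $y \in \mathcal{I}'$. Since $y^* \in \mathcal{I}'$, it follows that $\sup_{y \in \mathcal{I}'} \pi^\top y = \pi^\top y^* = \sup_{y\in\mathcal{I}} \pi^\top y$.

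Third, I will reuse the dual function computed in the proof of Lemma~\ref{lemma:dual-cert}, now with $\mathcal{I}'$ in place of $\mathcal{I}$. The Lagrangian derivation goes through verbatim, so for $\|\pi\|_* \le 1$ we get $g'(\pi) = \pi^\top z - \sup_{y \in \mathcal{I}'} \pi^\top y$. Dual feasibility $\|\pi\|_*\le 1$ does not depend on the primal set, so $\pi$ remains dual feasible. By the second step, $g'(\pi) = g(\pi)$, and strong duality for the original problem gives $g(\pi) = f(z)$. Weak duality for the new problem then yields $f'(z) \ge g'(\pi) = f(z)$. With the first step this gives $f'(z) = f(z)$, so $(y^*, s^*)$ attains the optimum and $\pi$ attains the dual optimum, closing the duality gap for $\mathcal{I}'$.

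The only delicate point is the second step. The representation of points of $\mathcal{I}'$ must be stated cleanly, and one must check that the supremum is exactly $\pi^\top y^*$, not merely bounded by it, so that the dual value does not change. Everything else reuses Lemma~\ref{lemma:dual-cert} and its dual function.
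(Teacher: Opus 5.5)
Your proposal is correct and follows essentially the paper's route. You extend the supporting-hyperplane inequality to $\mathcal{I}'$ by convex combination together with the hypothesis, then use dual feasibility of $\pi$ and strong duality to match the primal and dual values; the only cosmetic difference is that you obtain primal optimality from weak duality via $g'(\pi)$, whereas the paper writes out the equivalent pointwise chain $\|z-y\| \ge \pi^\top(z-y) \ge \pi^\top(z-y^*) = \|z-y^*\|$.
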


\begin{proof}
By Lemma~\ref{lemma:dual-cert}, the optimality of $y^*$ and $\pi$ over $\mathcal{I}$ implies that condition \eqref{eq:dual-cert-cond} holds for all $y \in \mathcal{I}$. It suffices to show that \eqref{eq:dual-cert-cond} continues to hold for all $y \in \mathcal{I}'$. Any $y \in \mathcal{I}' = \operatorname{conv}(\mathcal{V}_I')$ can be written as $y = \sum_{i=1}^k \lambda_i u_i$ with $u_i \in \mathcal{V}_I' = \mathcal{V}_I \cup \{z_I\}$, $\lambda_i \ge 0$, and $\sum_{i=1}^k \lambda_i = 1$. For every $u_i \in \mathcal{V}_I \subseteq \mathcal{I}$, $\pi^\top u_i \le \pi^\top y^*$ holds by \eqref{eq:dual-cert-cond}; for $u_i = z_I$, it holds by hypothesis \eqref{eq:light-check}. Therefore,
\begin{equation*}
\pi^\top y = \sum_{i=1}^k \lambda_i (\pi^\top u_i) \le \sum_{i=1}^k \lambda_i (\pi^\top y^*) = \pi^\top y^*, \qquad \forall y \in \mathcal{I}'.
\end{equation*}
Dual feasibility $\|\pi\|_* \le 1$ ensures, by H\"older's inequality, that the linear function $y \mapsto \pi^\top(z - y)$ never exceeds the distance $\|z - y\|$; the supporting hyperplane then shows that this lower bound is smallest at $y^*$, where it is tight.
Formally, for any $y \in \mathcal{I}'$,
\begin{equation*}
\|z - y\| \ge \pi^\top (z - y) = \pi^\top z - \pi^\top y \ge \pi^\top z - \pi^\top y^* = \pi^\top (z - y^*).
\end{equation*}
Since $y^*$ and $\pi$ are optimal over $\mathcal{I}$, strong duality gives $\pi^\top (z - y^*) = \|z - y^*\|$.
Hence, $\|z - y\| \ge \|z - y^*\|$ for all $y \in \mathcal{I}'$, proving that $y^*$ remains an optimal solution over $\mathcal{I}'$ and $f(z)$ remains unchanged.
Moreover, $\pi$ remains dual feasible, since $\|\pi\|_* \le 1$ does not depend on $\mathcal{I}$. Since $y^* \in \mathcal{I}'$ and $\pi^\top y \le \pi^\top y^*$ for all $y \in \mathcal{I}'$, we have $\sup_{y \in \mathcal{I}'} \pi^\top y = \pi^\top y^*$, so the dual objective of the problem over $\mathcal{I}'$ at $\pi$ is $\pi^\top z - \pi^\top y^* = \|z - y^*\|$. This equals the primal optimal value, so the duality gap is zero and $\pi$ is dual optimal.
\end{proof}

\begin{figure}
\centering
\resizebox{\textwidth}{!}{%
\begin{tikzpicture}[>=Stealth, scale=0.9]
\begin{scope}
\coordinate (v) at (-0.5,0);
\coordinate (ystar) at (0.5,0);
\fill[green!12] (0.5,0) -- (1.2,1.3) -- (2.8,1.5) -- (3.3,0.2) -- (2.0,-0.6) -- cycle;
\node[green!40!black] at (2.2,0.3) {$\mathcal{I}$};
\draw[dashed, blue!65!black, thick] (0.5,-1.3) -- (0.5,1.7);
\draw[black!70, thick] (v) -- (ystar);
\draw[densely dotted, thick, black!55] (0.5,0) -- (-0.5,1) -- (-1.5,0) -- (-0.5,-1) -- cycle;
\draw[->, thick, blue!65!black] (0.5,-1.15) -- ++(-0.7,0) node[below, midway] {$\pi$};
\draw[dashed, green!55!black, thick] (0.5,0) -- (0.7,1.15) -- (1.2,1.3);
\foreach \p in {(1.2,1.3),(2.8,1.5),(3.3,0.2),(2.0,-0.6)} \filldraw[green!55!black] \p circle (2.2pt);
\filldraw[red] (v) circle (2.2pt);
\node[below] at (v) {$z$};
\filldraw[orange!90!black] (ystar) circle (2.2pt);
\node[right, xshift=2pt, yshift=4pt] at (ystar) {$y^*$};
\filldraw[green!55!black] (0.7,1.15) circle (2.2pt);
\node[above, green!55!black] at (0.7,1.15) {$z_I$};
\node[font=\small\bfseries] at (0.9,-2.6) {(a)};
\end{scope}
\begin{scope}[shift={(5.5,0)}]
\coordinate (v) at (-0.5,0);
\coordinate (ystar) at (0.5,0);
\fill[green!12] (0.5,0) -- (1.2,1.3) -- (2.8,1.5) -- (3.3,0.2) -- (2.0,-0.6) -- cycle;
\node[green!40!black] at (2.2,0.3) {$\mathcal{I}$};
\draw[dashed, blue!65!black, thick] (0.5,-1.3) -- (0.5,1.7);
\draw[black!70, thick] (v) -- (ystar);
\draw[densely dotted, thick, black!55] (0.5,0) -- (-0.5,1) -- (-1.5,0) -- (-0.5,-1) -- cycle;
\draw[->, thick, blue!65!black] (0.5,-1.15) -- ++(-0.7,0) node[below, midway] {$\pi$};
\draw[dashed, green!55!black, thick] (0.5,0) -- (0.25,1.11) -- (1.2,1.3);
\foreach \p in {(1.2,1.3),(2.8,1.5),(3.3,0.2),(2.0,-0.6)} \filldraw[green!55!black] \p circle (2.2pt);
\filldraw[red] (v) circle (2.2pt);
\node[below] at (v) {$z$};
\filldraw[orange!90!black] (ystar) circle (2.2pt);
\node[right, xshift=2pt, yshift=4pt] at (ystar) {$y^*$};
\filldraw[green!55!black] (0.25,1.11) circle (2.2pt);
\node[above, green!55!black] at (0.25,1.11) {$z_I$};
\node[font=\small\bfseries] at (0.9,-2.6) {(b)};
\end{scope}
\begin{scope}[shift={(11.0,0)}]
\coordinate (v) at (-0.5,0);
\coordinate (ystar) at (0.5,0);
\fill[green!12] (0.5,0) -- (1.2,1.3) -- (2.8,1.5) -- (3.3,0.2) -- (2.0,-0.6) -- cycle;
\node[green!40!black] at (2.2,0.3) {$\mathcal{I}$};
\draw[dashed, blue!65!black, thick] (0.5,-1.3) -- (0.5,1.7);
\draw[black!70, thick] (v) -- (-0.2,0.5);
\draw[densely dotted, thick, black!55] (0.5,0) -- (-0.5,1) -- (-1.5,0) -- (-0.5,-1) -- cycle;
\draw[->, thick, blue!65!black] (0.5,-1.15) -- ++(-0.7,0) node[below, midway] {$\pi$};
\draw[dashed, green!55!black, thick] (0.5,0) -- (-0.2,0.5) -- (1.2,1.3);
\foreach \p in {(1.2,1.3),(2.8,1.5),(3.3,0.2),(2.0,-0.6)} \filldraw[green!55!black] \p circle (2.2pt);
\filldraw[red] (v) circle (2.2pt);
\node[below] at (v) {$z$};
\filldraw[green!55!black] (ystar) circle (2.2pt);
\node[right, xshift=2pt, yshift=4pt] at (ystar) {$y^*$};
\filldraw[orange!90!black] (-0.2,0.5) circle (2.2pt);
\node[above, green!55!black] at (-0.2,0.5) {$z_I$};
\node[font=\small\bfseries] at (0.9,-2.6) {(c)};
\end{scope}
\end{tikzpicture}
}
\caption{
Geometric illustration of the supporting-hyperplane check (Theorem~\ref{thm:light-check}). The point $y^* \in \mathcal{I}$ is the optimal $\ell_1$-projection of point $z$ onto $\mathcal{I} = \operatorname{conv}(\mathcal{V}_I)$, with dual variable $\pi$ defining a supporting hyperplane to $\mathcal{I}$ at $y^*$. The dotted rhombus is the $\ell_1$ ball of radius $f(z)$ around $z$.
\textbf{(a)} The newly added point $z_I$ satisfies $\pi^\top z_I \le \pi^\top y^*$, certifying that $y^*$ remains the optimal projection over $\mathcal{I}' = \operatorname{conv}(\mathcal{V}_I \cup \{z_I\})$. 
\textbf{(b)} The condition fails ($\pi^\top z_I > \pi^\top y^*$), yet $y^*$ happens to remain optimal over $\mathcal{I}'$. 
\textbf{(c)} The condition fails ($\pi^\top z_I > \pi^\top y^*$), and $y^*$ is no longer optimal over $\mathcal{I}'$.
}

\label{fig:light-check-illustration}
\end{figure}

We refer to \eqref{eq:light-check} as the supporting-hyperplane check, since it tests whether $z_I$ lies on the same side of the hyperplane supporting $\mathcal{I}$ at $y^*$ as $\mathcal{I}$ itself (Figure~\ref{fig:light-check-illustration}).
It requires only one inner product per vertex, i.e., $O(n_z)$ operations, instead of solving Problem~\eqref{eq:f-def} again.
By Theorem~\ref{thm:light-check}, the cached $y^*$ and $\pi$ remain valid as long as the check passes, so the same check can be applied to the inner points added in later iterations.
Since the check is a sufficient condition, it never accepts an outdated distance, and whenever it fails, $f(v)$ is recomputed and $y^*$ and $\pi$ are updated.
For norms that are differentiable away from the origin (e.g., $\ell_2$), the check is also necessary (\ref{app:light-check-tight}), whereas for the $\ell_1$ and $\ell_\infty$ norms it is only sufficient, so a failed check may trigger a re-solve that turns out to be unnecessary, as in Figure~\ref{fig:light-check-illustration}(b).

\subsection{Algorithm}\label{sec:algorithm}

G-ORACLE is obtained by replacing Step~2 of ORACLE with the vertex enumeration, per-vertex convex projections, and supporting-hyperplane check described above, while Steps~1, 3, and~4 remain unchanged.
The resulting method is summarized in Algorithm~\ref{alg:geometric-trial-point}.
For each vertex $v \in \mathcal{V}_O$, the algorithm caches $f(v)$ together with the optimal solution $y^*(v)$ and dual vector $\pi(v)$ of Problem~\eqref{eq:f-def}.
At the start of each iteration, $\mathcal{V}_O$ is updated with the halfspace $\mathcal{H}$ added in Step~4 of the previous iteration.
Problem~\eqref{eq:f-def} is solved for every newly created vertex, whereas each persisting vertex is first tested with the supporting-hyperplane check against the inner point $z_I$ added in the previous iteration, and Problem~\eqref{eq:f-def} is solved only if the check fails.

\begin{algorithm}
\caption{G-ORACLE}\label{alg:geometric-trial-point}
\KwData{Initial $\mathcal{I} = \operatorname{conv}(\mathcal{V}_I)$, $\mathcal{O}$ (Step 1); tolerance $\delta > 0$}
\KwResult{$\mathcal{I}, \mathcal{O}$ converged to within $\delta$ of $\mathcal{Z}$}
$\mathcal{H} \gets \mathbb{R}^{n_z}$;\quad $\mathcal{V}_O \gets \varnothing$\;
\Repeat{$d \le \delta$}{
    \tcc{Step 2: geometric trial-point search}
    $\mathcal{V}_O' \gets \operatorname{vert}(\mathcal{O} \cap \mathcal{H})$ \tcp*{incrementally (Section~\ref{sec:vertex-enum})}
    \ForEach{$v \in \mathcal{V}_O' \setminus \mathcal{V}_O$}{
        solve Problem~\eqref{eq:f-def} for $v$ and cache $f(v)$, $y^*(v)$, $\pi(v)$\;
    }
    \ForEach{$v \in \mathcal{V}_O \cap \mathcal{V}_O'$}{
        \If{\eqref{eq:light-check} fails for $v$}{
            solve Problem~\eqref{eq:f-def} for $v$ and cache $f(v)$, $y^*(v)$, $\pi(v)$\;
        }
    }
    $\mathcal{O} \gets \mathcal{O} \cap \mathcal{H}$;\quad $\mathcal{V}_O \gets \mathcal{V}_O'$\;
    $z_O \gets \argmax_{v \in \mathcal{V}_O} f(v)$;\quad $d \gets f(z_O)$\;
    \tcc{end Step 2}
    \If{$d > \delta$}{
        solve Problem~\eqref{eq:nearest-feasible} and obtain $z_I$, $\sigma$ \tcp*{Step 3}
        $\mathcal{V}_I \gets \mathcal{V}_I \cup \{z_I\}$;\quad $\mathcal{H} \gets \mathbb{R}^{n_z}$ \tcp*{Step 4}
        \If{$z_O \ne z_I$}{
            $\mathcal{H} \gets \mathcal{H}_\mathrm{P}$\;
        }
    }
}
\end{algorithm}

\section{Numerical Experiments}\label{sec:numerical}

In this section we compare G-ORACLE with ORACLE on a European power system model.
Since the two methods differ only in how the trial point is found (Step~2), we compare the run time of this step on identical inner and outer approximations and check numerically that both return the same distance as predicted by Theorem~\ref{thm:vertex}.
Section~\ref{sec:setup} describes the model and the experimental setup, and Section~\ref{sec:results} presents the results.

\subsection{Experimental Setup}\label{sec:setup}

All experiments use an electricity-only configuration of PyPSA-Eur \citep{hoersch2018pypsaeur}, an open model of the European power system built on PyPSA \citep{brown2018pypsa}, with one node per country (34 nodes) and a 3-hourly resolution over one year (2920 time steps).
The resulting ESOM is an LP with about 2.72~million variables, 5.64~million constraints, and 11.2~million nonzeros.
We consider eight PoIs ($n_z = 8$), each defined as the total extendable capacity of one technology summed over all nodes, namely the power capacity of generators (\texttt{p\_nom} in PyPSA) or the energy capacity of stores (\texttt{e\_nom}).
Six PoIs correspond to generation technologies: onshore wind (onwind), near-shore offshore wind on fixed-bottom foundations (offwind-ac), deep-water offshore wind on floating foundations (offwind-float), fixed-tilt solar PV (solar), solar PV with horizontal single-axis tracking (solar-hsat), and combined-cycle gas turbines (CCGT).
The remaining two are storage energy capacities: battery (battery) and hydrogen (H2).

The cost slack is set to $\epsilon = 5\%$.
In Step~1, Problem~\eqref{eq:mga-subproblem} is solved for $w = \pm e_i$, $i = 1, \dots, n_z$, which yields the achievable range of each PoI (Table~\ref{tab:dimension-ranges}) and hence the initial box $\mathcal{O}$.
Together with the cost-optimal point, these $2n_z = 16$ solutions form the $2n_z + 1 = 17$ initial inner points.

\begin{table}[htbp]
\centering
\caption{Achievable range of each PoI, obtained in Step~1 and used to initialize $\mathcal{O}$ and $\mathcal{I}$.}
\label{tab:dimension-ranges}
\begin{tabular}{lrrl}
\toprule
PoI & Min & Max & Unit \\
\midrule
onwind & 235.8 & 534.6 & GW \\
offwind-ac & 48.7 & 159.3 & GW \\
offwind-float & 0.0 & 78.4 & GW \\
solar & 183.6 & 1{,}290.0 & GW \\
solar-hsat & 0.0 & 964.1 & GW \\
CCGT & 219.9 & 365.7 & GW \\
battery & 0.0 & 2{,}336.1 & GWh \\
H2 & 0.0 & 163.8 & TWh \\
\bottomrule
\end{tabular}
\end{table}

Since the PoIs have different units and scales (e.g., GW vs.\ TWh), distances in the original units would be dominated by the PoIs with the largest ranges, biasing the trial-point search toward them.
We therefore map each PoI affinely onto $[-1,1]$ using its achievable range and run both algorithms in these normalized coordinates, transforming the cuts obtained from the ESOM accordingly (\ref{app:normalization}).
Distances are measured with the $\ell_\infty$ norm, which, like the $\ell_1$ norm, admits ORACLE's MILP reformulation.
The convergence tolerance is set to $\delta_\text{rel} = 4\%$ of the achievable range of each PoI, which corresponds to $\delta = 2\,\delta_\text{rel} = 0.08$, since every normalized PoI spans an interval of width $2$.
Analogously, we define the relative distance $d_\text{rel} = d/2$, which expresses the directed Hausdorff distance $d$ from $\mathcal{O}$ to $\mathcal{I}$ in the normalized space in percent of the achievable range of each PoI, so that the algorithms stop once $d_\text{rel} \le \delta_\text{rel}$.

We use the implementation of the double description method in the C library cddlib \citep{cddlib}, accessed via its Python bindings pycddlib \citep{pycddlib}.
Although the double description method is incremental, cddlib does not support incremental updates, so we patched the library and its Python bindings to allow incremental vertex enumeration; the patched versions are available in our open-source implementation \citep{mga_compass}.
The library supports both double-precision floating-point and exact rational arithmetic, which we denote as cdd(float) and cdd(rational), respectively.
Rational arithmetic is exact and thus avoids the numerical errors of floating-point arithmetic, such as rounding errors that can cause nearly coincident vertices to be merged or missed, but it is considerably slower.

Since ORACLE and G-ORACLE differ only in Step~2, we compare this step on identical instances.
We run G-ORACLE with cdd(float) as the vertex enumeration method in Step~2 (Algorithm~\ref{alg:geometric-trial-point}).
The vertex update runs in a single process, while the per-vertex instances of Problem~\eqref{eq:f-def} are distributed across 8 parallel processes with one thread each.
At every iteration, we additionally solve Step~2 of ORACLE, i.e., Problem~\eqref{eq:trial-point-problem} via its MILP reformulation, on the same $\mathcal{I}$ and $\mathcal{O}$; its trial point is not used to proceed to Steps~3 and~4.
Its big-M constants are recomputed at every iteration from the KKT conditions and the axis-aligned bounding boxes of $\mathcal{O}$ and $\mathcal{I}$ in the normalized space (\ref{app:big-m}), which keeps them tight and strengthens the LP relaxation of the MILP.
Likewise, we run G-ORACLE's Step~2 with cdd(rational) on the same $\mathcal{I}$ and $\mathcal{O}$, with the same parallelization, as a reference to check whether cdd(float) yields the correct relative distance.
We denote these three variants of Step~2 as O-MILP for ORACLE and as G-float and G-rational for G-ORACLE with cdd(float) and cdd(rational), respectively.
For each variant, we measure its wall-clock time and stop evaluating it once $d_\text{rel} \le \delta_\text{rel}$ or it exceeds 20 minutes in a single iteration.

In Step~4, we add both the projection cut~\eqref{eq:projection-cut} and the cost cut~\eqref{eq:cost-cut}, the latter requiring one solve of Problem~\eqref{eq:value-fn} per iteration.
Hence, up to two halfspaces are added to $\mathcal{O}$ per iteration.

All algorithms are implemented in Python 3.14, and all optimization problems are solved with Gurobi 13 on a KVM virtual machine with 54 vCPUs (AMD EPYC, 2.0~GHz) and 125~GB of RAM.
The ESOMs are solved with the barrier method without crossover using 32 threads; all other settings are left at Gurobi's defaults.
The code and data used to generate the results are available in our open-source implementation \citep{mga_compass}.

\subsection{Results}\label{sec:results}

\begin{figure}
\centering
\includegraphics[width=\textwidth]{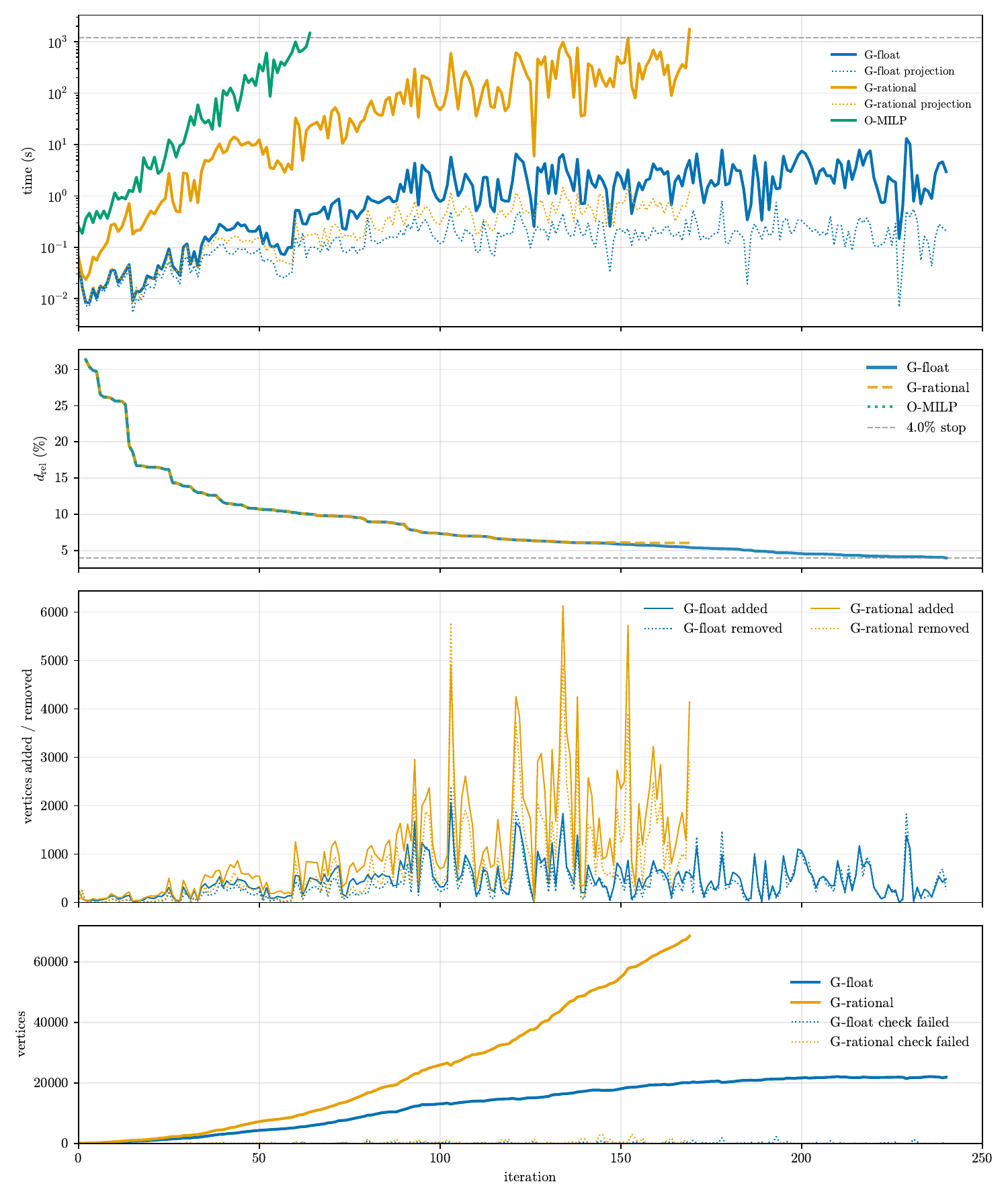}
\caption{
    \textbf{First}: Run time of Step~2 for O-MILP (ORACLE, MILP reformulation of the bilevel problem) and for G-float and G-rational (G-ORACLE, Algorithm~\ref{alg:geometric-trial-point}, with cdd(float) and cdd(rational), respectively). For G-float and G-rational, the dotted lines show the time spent on the projections, i.e., solving Problem~\eqref{eq:f-def}. Each variant is stopped once $d_\text{rel} \le \delta_\text{rel} = 4\%$ or it exceeds 20 minutes in a single iteration.
    \textbf{Second}: The relative distance $d_\text{rel}$.
    \textbf{Third}: Number of vertices added to and removed from $\mathcal{O}$ in each iteration.
    \textbf{Fourth}: Number of vertices of the outer approximation $\mathcal{O}$ after each iteration and number of vertices that fail the supporting-hyperplane check~\eqref{eq:light-check} in each iteration.
 }
\label{fig:stats-run-comparison}
\end{figure}

The first panel of Figure~\ref{fig:stats-run-comparison} shows the run time of O-MILP, G-float, and G-rational.
G-float reaches $d_\text{rel} \le \delta_\text{rel} = 4\%$ after 240 iterations, and its run time is at most 14~s per iteration.
In contrast, the run time of O-MILP increases rapidly with the number of iterations and exceeds 20 minutes at iteration 64 (1{,}469~s), when the relative distance (second panel) is still $10.02\%$.
Up to this iteration, the cumulative run time of G-float is only about 9~s.
At every iteration for which both were computed, O-MILP and G-float return the same distance $d$, which confirms Theorem~\ref{thm:vertex} numerically.

For comparison, each iteration of both ORACLE and G-ORACLE (Algorithms~\ref{alg:oracle} and~\ref{alg:geometric-trial-point}) requires two ESOM solves, namely Problem~\eqref{eq:nearest-feasible} in Step~3 and Problem~\eqref{eq:value-fn} for the cost cut in Step~4.
The former takes between 120 and 1{,}631~s, with a median of 241~s, and the latter between 114 and 277~s, with a median of 159~s.
Both are solved from scratch, since linopy \citep{Hofmann2023}, the optimization framework underlying PyPSA, does not support warm starts.
Hence, G-float accounts for only a small fraction of the run time per iteration, which is dominated by the ESOM solves.

G-rational exceeds 20 minutes at iteration 169 (1{,}746~s); up to this iteration, the cumulative run time of G-float is about 210~s.
The run time of both G-float and G-rational is correlated with the number of vertices added to and removed from $\mathcal{O}$ in each iteration (third panel).
Although G-float and G-rational should in theory produce the same vertex set, G-rational reports considerably more vertices (fourth panel), and the ratio of the two grows from 1.7 at iteration 50 to 3.4 at iteration 169 (68{,}606 vs.\ 20{,}121).
Even so, both remain far below the worst-case bound. With 354 halfspaces at iteration 169, i.e., the 16 of the initial box and two cuts per iteration, the upper bound theorem allows up to $V(354, 8) \approx 6.2 \times 10^8$ vertices by~\eqref{eq:upper-bound}, about 9{,}000 times as many as G-rational reports.

The mismatch between the vertex sets of G-float and G-rational affects the relative distance as well, although only in later iterations. Up to iteration 126, both variants return the same value within $6 \times 10^{-7}$ percentage points (pp), except at iteration 95 ($7.50\%$ vs.\ $7.61\%$).
Between iterations 127 and 136, they occasionally differ by up to 0.07~pp, and from iteration 137 on, they differ at every iteration.
In all 40 iterations in which they differ, G-rational returns the larger relative distance.
While the relative distance of G-rational stagnates at $6.11\%$ from iteration 136 to 152 and at $6.06\%$ from iteration 153 to 169, that of G-float keeps decreasing to $5.42\%$, so that the difference grows to 0.64~pp at iteration 169.
This suggests that cdd(float) merges or drops vertices of $\mathcal{O}$, which can include the vertex furthest from $\mathcal{I}$.
Since the trial points of G-float never target this vertex, the resulting cuts do not cut it off, and the relative distance of G-rational stagnates.
In this case, G-float underestimates $d$.

The fourth panel also shows the number of vertices that fail the supporting-hyperplane check~\eqref{eq:light-check} in each iteration.
Only for these vertices and for newly added vertices must Problem~\eqref{eq:f-def} be solved again; for all other vertices that survive the cut, the distance computed in a previous iteration remains valid (Theorem~\ref{thm:light-check}).
Summed over the 240 iterations of G-float, 47{,}026 of the 3{,}163{,}114 checks performed on surviving vertices fail, i.e., 1.49\%.
Hence, the supporting-hyperplane check avoids recomputing the distance in the remaining 98.51\% of the cases, so the time spent on the projections remains small compared with the vertex update (first panel).
Moreover, for 46{,}965 (99.87\%) of the failed checks, the recomputed distance is strictly smaller than the previous one.
Thus, although the check is only sufficient for the $\ell_\infty$ norm, only 61 (0.13\%) of the re-solves it triggers are unnecessary.

\section{Conclusion}\label{sec:conclusion}

We presented G-ORACLE, a geometric variant of the ORACLE method for exploring the near-optimal set of ESOMs.
It rests on the observation that the trial point sought by ORACLE's bilevel problem is always a vertex of the outer approximation, which allows the bilevel problem to be replaced by incremental vertex enumeration and independent per-vertex convex projections.
A supporting-hyperplane check, based on the dual vector of the projection problem, further avoids unnecessary recomputation of these projections as the inner approximation grows.
Since only the trial-point search changes, G-ORACLE retains ORACLE's convergence guarantee.

In our numerical experiments on a PyPSA-Eur model with eight PoIs, the trial-point search of ORACLE exceeds 20 minutes at iteration 64, when the relative distance is still 10.02\%, and its run time increases steeply with the number of iterations.
In contrast, the trial-point search of G-ORACLE with floating-point vertex enumeration takes at most 14~s per iteration throughout the 240 iterations needed to reach a relative distance of 4\%.
At every iteration for which both were computed, ORACLE and G-ORACLE return the same distance, which confirms Theorem~\ref{thm:vertex} numerically.
Compared with the two ESOM solves per iteration, with median run times of 241~s and 159~s, the trial-point search of G-ORACLE is negligible, so that the run time per iteration is dominated by the ESOM solves.

The double description method used for incremental vertex enumeration is considerably faster in floating-point than in exact rational arithmetic and returns nearly the same distance in early iterations, but a consistently smaller one in later iterations.
This suggests that floating-point arithmetic may drop the vertex furthest from the inner approximation, in which case it underestimates the distance.
This is consistent with the known numerical sensitivity of the double description method in floating-point arithmetic \citep{cddlib}.
Adjusting the numerical tolerances of the double description method and the feasibility tolerance of the LP solver is worth investigating to reduce this mismatch.

The run time of G-ORACLE's trial-point search depends on the number of vertices of the outer approximation, which is $O(m^{\lfloor n_z/2 \rfloor})$ in the worst case, where $m$ is the number of cuts in the outer approximation and $n_z$ is the number of PoIs (Section~\ref{sec:vertex-enum}).
Although the number of vertices remained several orders of magnitude below this bound in our numerical example with $n_z = 8$, it is worth investigating how it behaves for other ESOMs and larger $n_z$.


G-ORACLE allows several trial points to be processed per iteration. Since the distances of all vertices of $\mathcal{O}$ are available, Problem~\eqref{eq:nearest-feasible} can be solved in parallel for the $k$ vertices furthest from the inner approximation, adding $k$ inner points and up to $k$ cuts per iteration.
A caveat is that these vertices may lie close to each other, so the resulting cuts may be largely redundant and convergence may not accelerate in proportion to the additional computation.
This can be mitigated by selecting, among the vertices whose distance exceeds $\delta$, only those that are sufficiently far apart from each other.

Overall, by removing the bilevel bottleneck, G-ORACLE makes ORACLE's convergence guarantee affordable for ESOMs, leaving the ESOM solves as the only significant computational cost.

\appendix

\section{Tightness of the Supporting-Hyperplane Check}\label{app:light-check-tight}

Let $z$, $y^*$, $\pi$, $z_I$, and $\mathcal{I}'$ be as in Theorem~\ref{thm:light-check}, with $z \ne y^*$. Recall that $\pi$ is a subgradient of $\|\cdot\|$ at $u \in \mathbb{R}^{n_z}$, written $\pi \in \partial\|\cdot\|(u)$, if $\|w\| \ge \|u\| + \pi^\top(w - u)$ for all $w \in \mathbb{R}^{n_z}$, or equivalently, if $\|\pi\|_* \le 1$ and $\pi^\top u = \|u\|$. By dual feasibility and strong duality (proof of Theorem~\ref{thm:light-check}), the optimal dual vector $\pi$ of \eqref{eq:f-def} therefore satisfies $\pi \in \partial\|\cdot\|(z - y^*)$.

\begin{proposition}\label{prop:light-check-tight}
If $\|\cdot\|$ is differentiable at $z - y^*$ (e.g., $\|\cdot\|_p$ for $1 < p < \infty$), then \eqref{eq:light-check} is also necessary: $y^*$ minimizes $\|z - \cdot\|$ over $\mathcal{I}'$ if and only if $\pi^\top z_I \le \pi^\top y^*$.
\end{proposition}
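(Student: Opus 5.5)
Sufficiency is already Theorem~\ref{thm:light-check}, so only necessity needs proof. I will argue by contraposition: assuming $\pi^\top z_I > \pi^\top y^*$, I will show that moving from $y^*$ slightly toward $z_I$ strictly decreases the distance to $z$. Such points stay in $\mathcal{I}'$ by convexity, so $y^*$ cannot minimize $\|z-\cdot\|$ over $\mathcal{I}'$.

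The first step identifies $\pi$. From the preamble, $\pi \in \partial\|\cdot\|(z-y^*)$. At a point of differentiability the subdifferential of a convex function is the singleton containing the gradient, so $\pi = \nabla\|\cdot\|(z-y^*)$. The assumption $z \ne y^*$ matters here, since norms are never differentiable at the origin. This is the only place differentiability enters, and it is exactly what fails for $\ell_1$ and $\ell_\infty$, where $\pi$ may be any element of a nontrivial subdifferential (as in Figure~\ref{fig:light-check-illustration}(b)).

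The second step is a one-sided directional derivative computation. Define $y_t = y^* + t(z_I - y^*)$ for $t\in[0,1]$; these points lie in $\mathcal{I}'$ because $y^*\in\mathcal{I}\subseteq\mathcal{I}'$ and $z_I\in\mathcal{I}'$. Set $\phi(t) = \|z - y_t\| = \|(z-y^*) - t(z_I-y^*)\|$. By differentiability at $z-y^*$ and the chain rule,
\begin{equation*}
\phi'(0) = -\nabla\|\cdot\|(z-y^*)^\top (z_I - y^*) = -\pi^\top(z_I - y^*) < 0 .
\end{equation*}
Hence $\phi(t) < \phi(0) = \|z-y^*\|$ for all sufficiently small $t>0$. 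This contradicts the optimality of $y^*$ over $\mathcal{I}'$ and completes the contrapositive.

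The main obstacle is the first step, namely that $\pi$ equals the gradient. Theorem~\ref{thm:light-check} only gives $\pi$ as some optimal dual vector. I will use the characterization $\|\pi\|_*\le 1$, $\pi^\top(z-y^*)=\|z-y^*\|$, which is equivalent to subgradient membership, together with the standard fact that the subdifferential at a point of differentiability is $\{\nabla\}$. Strictly speaking, differentiability is not even needed for the derivative computation. If it is dropped, the directional derivative is $\max_{g\in\partial\|\cdot\|(z-y^*)} -g^\top(z_I-y^*)$, and this is negative for every $g$ only when the subdifferential is the singleton $\{\pi\}$; that is precisely the role of differentiability.
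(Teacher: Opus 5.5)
Your proof is correct and is essentially the paper's argument. Both proofs use differentiability only to make $\pi$ the unique subgradient (the gradient) of the norm at $z-y^*$. Your one-sided derivative of $t\mapsto\|z-y^*-t(z_I-y^*)\|$ at $t=0$ is exactly the necessity half of the first-order optimality condition $\nabla\varphi(y^*)^\top(y-y^*)\ge 0$, which the paper invokes directly and evaluates at $y=z_I$; you simply unpack it along the segment $[y^*,z_I]\subseteq\mathcal{I}'$.
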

\begin{proof}
Sufficiency is Theorem~\ref{thm:light-check}. For necessity, differentiability makes $\pi$ the unique subgradient of $\|\cdot\|$ at $z - y^*$, so $\varphi(y) = \|z - y\|$ is differentiable at $y^*$ with $\nabla\varphi(y^*) = -\pi$. The first-order optimality condition for minimizing the convex, differentiable $\varphi$ over the convex set $\mathcal{I}'$ states that $y^*$ is a minimizer if and only if $\nabla\varphi(y^*)^\top(y - y^*) \ge 0$ for all $y \in \mathcal{I}'$, i.e., $\pi^\top y \le \pi^\top y^*$ for all $y \in \mathcal{I}'$; taking $y = z_I$ gives \eqref{eq:light-check}.
\end{proof}

This tightness fails for $\ell_1$ and $\ell_\infty$, which are not differentiable at every point; for example, $\|\cdot\|_1$ is non-differentiable wherever a component of $z - y^*$ vanishes, and $\|\cdot\|_\infty$ wherever its maximum absolute component is attained more than once. At such points $\partial\|\cdot\|(z-y^*)$ is a set rather than a single vector, so $y^*$ can remain optimal over $\mathcal{I}'$ via some other subgradient in that set even when the particular $\pi$ obtained from solving \eqref{eq:f-def} fails \eqref{eq:light-check}.

\section{Normalization of the PoIs}\label{app:normalization}

Let $S = \operatorname{diag}(z_{ub} - z_{lb})$, where $z_{lb}$ and $z_{ub}$ are the bounds of the initial box $\mathcal{O}$ obtained in Step~1.
Each point $z \in \mathbb{R}^{n_z}$ is mapped to normalized coordinates by
\begin{equation*}
\tilde z = 2S^{-1}(z - z_{lb}) - \mathbf{1},
\end{equation*}
which maps the initial box onto $[-1,1]^{n_z}$. Its inverse is $z = \tfrac{1}{2}S\tilde z + \bar z$, where $\bar z = \tfrac{1}{2}(z_{lb} + z_{ub})$ is the center of the box.
Since the map is affine and invertible, it maps polytopes to polytopes and preserves convexity, so both algorithms apply unchanged in the normalized coordinates.

Problem~\eqref{eq:nearest-feasible} is solved in the original coordinates, with $z_O$ mapped back by the inverse map and the residual measured by the weighted norm $\|2S^{-1}r\|$.
Since $\tilde z_O - \tilde z = 2S^{-1}(z_O - z)$ for any $z$, this weighted norm equals the distance in normalized coordinates, so $z_I$ is the nearest point of $\mathcal{Z}$ to $z_O$ in the normalized space.
The projection cut $\mathcal{H}_\mathrm{P}$ and the cost cut $\mathcal{H}_\mathrm{C}$ are obtained from the ESOM in the original coordinates, as halfspaces $a^\top z \le a_0$.
Substituting $z = \tfrac{1}{2}S\tilde z + \bar z$ gives the equivalent halfspace
\begin{equation*}
\left(\tfrac{1}{2}Sa\right)^\top \tilde z \le a_0 - a^\top \bar z
\end{equation*}
in normalized coordinates, so each component of the normal vector is scaled by half the range of the corresponding PoI, and the right-hand side is shifted by the value of $a^\top z$ at the center of the box.

\section{Big-M Constants of the Bilevel Reformulation}\label{app:big-m}

For the $\ell_\infty$ norm, let $\mathcal{V}_I = \{u_1, \dots, u_k\}$ and $\mathcal{O} = \{z \in \mathbb{R}^{n_z} \mid A_O z \le b_O\}$. The lower-level problem of \eqref{eq:trial-point-problem} for a fixed $z \in \mathcal{O}$ is the LP
\begin{equation*}
\begin{aligned}[t]
f(z) = \min_{t \in \mathbb{R},\, \lambda \in \mathbb{R}^k} \quad & t \\
\text{s.t.} \quad & z - \textstyle\sum_{i=1}^k \lambda_i u_i \le t\mathbf{1}, \quad (\alpha^+) \\
& -z + \textstyle\sum_{i=1}^k \lambda_i u_i \le t\mathbf{1}, \quad (\alpha^-) \\
& \mathbf{1}^\top \lambda = 1, \quad (\eta) \\
& \lambda \ge 0, \quad (\beta)
\end{aligned}
\end{equation*}
in which the dual vector $\pi$ of Problem~\eqref{eq:f-def} corresponds to $\alpha^+ - \alpha^-$.
Its KKT conditions consist of primal feasibility, dual feasibility $\alpha^\pm, \beta \ge 0$, stationarity
\begin{equation*}
\textstyle\sum_{j=1}^{n_z} (\alpha^+_j + \alpha^-_j) = 1, \qquad \eta - \beta_i - u_i^\top(\alpha^+ - \alpha^-) = 0, \quad i = 1, \dots, k,
\end{equation*}
and the complementarity conditions $\alpha^\pm_j q^\pm_j = 0$ and $\beta_i \lambda_i = 0$, where $q^\pm_j = t \mp (z - \sum_i \lambda_i u_i)_j \ge 0$ are the primal slacks. Each complementarity condition is linearized with a binary variable, e.g., $\alpha^+_j \le M_D \theta^+_j$ and $q^+_j \le M_P(1 - \theta^+_j)$ with $\theta^+_j \in \{0,1\}$, where $M_P$ bounds the primal variables $q^\pm_j$ and $\lambda_i$, and $M_D$ bounds the dual variables $\alpha^\pm_j$ and $\beta_i$.

\textbf{Primal constant.} Since $\lambda$ is a vector of convex weights, $\lambda_i \le 1$. Let $[z^O_{lb}, z^O_{ub}]$ be the axis-aligned bounding box of $\mathcal{O}$, obtained from $2n_z$ LPs over $\mathcal{O}$, and $[z^I_{lb}, z^I_{ub}]$ that of $\mathcal{I}$, given by the componentwise minima and maxima of $u_1, \dots, u_k$. Since $z \in \mathcal{O}$ and every point of $\mathcal{I}$ lies in $[z^I_{lb}, z^I_{ub}]$,
\begin{equation*}
t^* \le T_{\max} := \max_{j} \max\left(z^O_{ub,j} - z^I_{lb,j},\; z^I_{ub,j} - z^O_{lb,j}\right),
\end{equation*}
and each slack satisfies $q^\pm_j \le t^* + T_{\max} \le 2T_{\max}$. These bounds need only hold at points satisfying the KKT conditions. Since the lower-level problem is an LP, its KKT conditions are sufficient for optimality, so every such point has $t = t^* = f(z)$, and no explicit bound on $t$ is required in the MILP. Hence $M_P = \max(1,\, 2T_{\max})$, where 1 bounds $\lambda_i$ and $2T_{\max}$ bounds $q^\pm_j$.

\textbf{Dual constant.} The first stationarity condition and $\alpha^\pm \ge 0$ give $\alpha^\pm_j \le 1$ and $\|\alpha^+ - \alpha^-\|_1 \le 1$. Since $\mathbf{1}^\top\lambda = 1$, some $\lambda_l > 0$, and thus $\beta_l = 0$, so the second stationarity condition and H\"older's inequality give $|\eta| = |u_l^\top(\alpha^+ - \alpha^-)| \le \|u_l\|_\infty \|\alpha^+ - \alpha^-\|_1 \le R_{\mathrm{in}} := \max_{i,j} |u_{ij}|$. Substituting into the same condition for every $i$ yields $\beta_i \le 2R_{\mathrm{in}}$. Hence $M_D = \max(1,\, 2R_{\mathrm{in}})$, where 1 bounds $\alpha^\pm_j$ and $2R_{\mathrm{in}}$ bounds $\beta_i$.

Both constants are recomputed at every iteration from the current $\mathcal{O}$ and $\mathcal{V}_I$ in the normalized space. Any larger values are also valid but weaken the LP relaxation of the MILP.

\bibliographystyle{elsarticle-harv}
\bibliography{refs}

\end{document}